\newtheorem{dfn}{Definition}
\newtheorem{thm}[dfn]{Theorem}
\newtheorem{cor}[dfn]{Corollary}
\newtheorem{ex}[dfn]{Example}
\title{Chain rules and inequalities for the BHT
fractional calculus on arbitrary time scales\thanks{This is 
a preprint of a paper whose final and definite form is published in 
\emph{Arabian Journal of Mathematics}, ISSN 2193-5343 (Print) 
2193-5351 (Online). Paper submitted 25/Feb/2016; 
revised 24/Sept/2016; accepted 28/Nov/2016.}}
\author{Eze R. Nwaeze$^1$\\
{\tt enwaeze@mytu.tuskegee.edu}
\and
Delfim F. M. Torres$^2$\\
{\tt delfim@ua.pt}}
\date{$^1$Department of Mathematics, Tuskegee University,\\
Tuskegee, AL 36088, USA\\[0.3cm]
$^2$CIDMA, Department of Mathematics, University of Aveiro,\\
3810-193 Aveiro, Portugal}
\begin{document}

\maketitle

\begin{abstract}
We develop the Benkhettou--Hassani--Torres
fractional (noninteger order)
calculus on time scales by proving
two chain rules for the
$\alpha$-fractional derivative
and five inequalities
for the $\alpha$-fractional integral.
The results coincide with well-known
classical results when the operators are
of (integer) order $\alpha = 1$
and the time scale coincides with
the set of real numbers.

\bigskip

\noindent \textbf{Keywords:} local fractional calculus;
calculus on time scales; chain rules; integral inequalities.

\medskip

\noindent \textbf{MSC 2010:} 26A33; 26D10; 26E70.
\end{abstract}


\section{Introduction}

The study of fractional (noninteger order)
calculus on time scales is a subject of strong
current interest \cite{MyID:152,MyID:201,MyID:296,MyID:320}.
Recently, Benkhettou, Hassani and Torres
introduced a (local) fractional calculus on arbitrary
time scales $\mathbb{T}$ (called here the BHT fractional
calculus) based on the $T_\alpha$ differentiation operator
and the $\alpha$-fractional integral \cite{MyID:324}.
The Hilger time-scale calculus \cite{BookTS:2001}
is then obtained as a particular case, by choosing $\alpha=1$.
In this paper we develop the BHT time-scale fractional calculus
initiated in \cite{MyID:324}. Precisely, we prove
two different chain rules for the fractional derivative $T_\alpha$
(Theorems~\ref{CR} and \ref{thm:CR2}) and several
inequalities for the $\alpha$-fractional integral:
H\"{o}lder's inequality (Theorem~\ref{thm:Hineq}),
Cauchy--Schwarz's inequality (Theorem~\ref{thm:CSineq}),
Minkowski's inequality (Theorem~\ref{thm:Mink_ineq}),
generalized Jensen's fractional inequality (Theorem~\ref{thm:Jen})
and a weighted fractional Hermite--Hadamard inequality 
on time scales (Theorem~\ref{thm:HHineq}).

The paper is organized as follows. In Section~\ref{sec:Prelim}
we recall the basics of the the BHT fractional calculus.
Our results are then formulated and proved in Section~\ref{sec:MR}.


\section{Preliminaries}
\label{sec:Prelim}

We briefly recall the necessary notions
from the BHT fractional calculus \cite{MyID:324}:
fractional differentiation and fractional integration
on time scales. For an introduction to the time-scale
theory we refer the reader to the book \cite{BookTS:2001}.

\begin{dfn}[See \cite{MyID:324}]
\label{def:fd:ts}
Let $\mathbb{T}$ be a time scale,
$f:\mathbb{T}\rightarrow \mathbb{R}$, $t\in \mathbb{T}^{\kappa}$,
and $\alpha \in (0,1]$. For $t>0$, we define
$T_\alpha(f)(t)$ to be the number (provided it exists) with the property
that, given any $\epsilon >0$, there is a $\delta$-neighbourhood
$\mathcal{V}_t = \left(t-\delta ,t+\delta\right) \cap \mathbb{T}$
of $t$, $\delta > 0$, such that
$\left \vert \left[ f(\sigma (t))-f(s)\right]t^{1-\alpha}
-T_\alpha(f)(t)\left[ \sigma(t)-s\right]\right \vert
\leq \epsilon \left \vert \sigma
(t)-s\right \vert$
for all $s\in \mathcal{V}_t$. We call
$T_\alpha(f)(t)$ the $\alpha$-fractional derivative of $f$
of order $\alpha $ at $t$, and we define the
$\alpha$-fractional derivative at 0 as
$T_\alpha(f)(0):=\displaystyle\lim_{t\rightarrow 0^+} T_\alpha(f)(t)$.
\end{dfn}

If $\alpha = 1$, then we obtain from Definition~\ref{def:fd:ts}
the Hilger delta derivative of time scales \cite{BookTS:2001}.
The $\alpha$-fractional derivative of order zero
is defined by the identity operator: $T_0(f) := f$.
The basic properties of the $\alpha$-fractional derivative
on time scales are given in \cite{MyID:324}, together with
several illustrative examples. Here we just recall the
item (iv) of Theorem 4 in \cite{MyID:324}, which is needed
in the proof of our Theorem~\ref{CR}. 

\begin{thm}[See \cite{MyID:324}]
Let $\alpha\in (0, 1]$ and $\mathbb{T}$ be a time scale. 
Assume $f: \mathbb{T}\rightarrow \mathbb{R}$ and let 
$t\in {\mathbb{T}}^\kappa$. If $f$ is $\alpha$-fractional 
differentiable of order $\alpha$ at $t$, then 
$$
f(\sigma(t))=f(t)+\mu(t)t^{\alpha-1}T_{\alpha}(f)(t).
$$
\end{thm}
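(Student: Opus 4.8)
The plan is to read the identity straight out of Definition~\ref{def:fd:ts}, exploiting the freedom to choose the test point $s$ in the defining inequality. First I would dispose of the trivial case in which $t$ is right-dense, i.e. $\sigma(t)=t$ and hence $\mu(t)=\sigma(t)-t=0$. There the claimed formula reduces to $f(\sigma(t))=f(t)$, which holds by definition of $\sigma$ and requires no hypothesis on $f$. The content of the statement therefore lies entirely in the right-scattered case $\sigma(t)>t$, where $\mu(t)>0$.

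For the right-scattered case I would invoke the differentiability of $f$ at $t$. Fix an arbitrary $\epsilon>0$ and let $\mathcal{V}_t=(t-\delta,t+\delta)\cap\mathbb{T}$ be the neighbourhood furnished by the definition. The key move is to take $s=t$, which is admissible precisely because $t\in\mathcal{V}_t$. Using $\sigma(t)-t=\mu(t)$, the defining inequality then becomes
$$
\left\vert\,[f(\sigma(t))-f(t)]\,t^{1-\alpha}-T_\alpha(f)(t)\,\mu(t)\,\right\vert\le\epsilon\,\mu(t).
$$
Since $\mu(t)>0$ is a fixed number while $\epsilon>0$ is arbitrary, letting $\epsilon\to0^{+}$ forces the bracketed expression to vanish, giving $[f(\sigma(t))-f(t)]\,t^{1-\alpha}=T_\alpha(f)(t)\,\mu(t)$. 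Multiplying through by $t^{\alpha-1}$, which is legitimate because $T_\alpha$ is defined only for $t>0$, and rearranging yields exactly $f(\sigma(t))=f(t)+\mu(t)\,t^{\alpha-1}\,T_\alpha(f)(t)$.

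The argument is essentially routine, so I do not anticipate a genuine obstacle; the only two points that demand care — and which I would therefore flag as the crux — are the admissibility of the substitution $s=t$, resting on the fact that every point lies in its own neighbourhood $\mathcal{V}_t$, and the limiting step, which works only because $\mu(t)$ is held fixed and strictly positive as $\epsilon$ is sent to zero. This recovers, at $\alpha=1$, the familiar simple useful relation $f(\sigma(t))=f(t)+\mu(t)f^{\Delta}(t)$ of Hilger's time-scale calculus.
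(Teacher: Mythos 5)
Your proof is correct: the paper itself does not prove this statement (it is quoted from the cited reference \cite{MyID:324} as item (iv) of Theorem~4 there), and your argument --- splitting into the right-dense case, where the identity is trivial, and the right-scattered case, where one takes $s=t$ in Definition~\ref{def:fd:ts}, lets $\epsilon\to 0^+$ with $\mu(t)>0$ fixed, and multiplies by $t^{\alpha-1}$ (valid since $t>0$) --- is exactly the standard proof given in that reference. No gaps; the two points you flag (admissibility of $s=t$ and the limiting step) are indeed the only places requiring care.
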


The other main operator of \cite{MyID:324} is
the $\alpha$-fractional integral of
$f : \mathbb{T} \rightarrow \mathbb{R}$, defined by
$$
\int f(t) \Delta^\alpha t := \int f(t) t^{\alpha-1} \Delta t,
$$
where the integral on the right-hand side is the usual
Hilger delta-integral of time scales \cite[Def.~26]{MyID:324}.
If $F_{\alpha}(t):=\int f(t)\Delta^{\alpha} t$,
then one defines the Cauchy $\alpha$-fractional integral by
$\int_{a}^{b}f(t)\Delta^{\alpha} t
:=F_{\alpha}(b)-F_{\alpha}(a)$, where $a, b\in \mathbb{T}$
\cite[Def.~28]{MyID:324}. The interested reader
can find the basic properties of the Cauchy
$\alpha$-fractional integral in \cite{MyID:324}. 
Here we are interested to prove some fractional
integral inequalities on time scales. For that, we use
some of the properties of \cite[Theorem~31]{MyID:324}.

\begin{thm}[Cf. Theorem~31 of \cite{MyID:324}]
\label{Int-Proprty}
Let $\alpha\in (0,\ 1]$, $a, b, c \in \mathbb{T}$, $\gamma\in\mathbb{R}$,
and $f, g$ be two rd-continuous functions. Then,
\begin{enumerate}

\item[(i)] $\int_{a}^{b}[f(t)+g(t)]\Delta^{\alpha} t
= \int_{a}^{b}f(t)\Delta^{\alpha} t 
+ \int_{a}^{b}g(t)\Delta^{\alpha} t$;

\item[(ii)] $\int_{a}^{b}(\gamma f)(t)\Delta^{\alpha} t
= \gamma \int_{a}^{b}f(t)\Delta^{\alpha} t$;

\item[(iii)] $\int_{a}^{b}f(t)\Delta^{\alpha} t
= - \int_{b}^{a}f(t)\Delta^{\alpha} t$;

\item[(iv)] $\int_{a}^{b}f(t)\Delta^{\alpha} t
= \int_{a}^{c}f(t)\Delta^{\alpha} t 
+\int_{c}^{b}f(t)\Delta^{\alpha} t$;

\item[(v)] if there exist $g: \mathbb{T}\rightarrow \mathbb{R}$
with $|f(t)|\leq g(t)$ for all $t\in [a,\ b]$, then
$\left| \int_{a}^{b} f(t)\Delta^{\alpha} t\right|
\leq\int_{a}^{b} g(t)\Delta^{\alpha} t$.
\end{enumerate}
\end{thm}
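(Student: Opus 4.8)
The plan is to unfold the definition of the $\alpha$-fractional integral, namely $\int f(t)\,\Delta^\alpha t = \int f(t)\,t^{\alpha-1}\,\Delta t$, and to reduce each of the five assertions to the corresponding, already-known property of the classical Hilger delta-integral. Throughout one works on the positive part of the time scale, so that the weight $w(t):=t^{\alpha-1}$ is well defined, strictly positive, and rd-continuous; since $f$ and $g$ are assumed rd-continuous, the products $fw$ and $gw$ are rd-continuous, and hence every underlying delta-integral invoked below exists. Because the statement merely recollects Theorem~31 of the source paper, the task is essentially one of verification rather than discovery.

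For items (i) and (ii) I would simply factor the weight. Writing $[f(t)+g(t)]\,t^{\alpha-1} = f(t)\,t^{\alpha-1} + g(t)\,t^{\alpha-1}$ and invoking the linearity of the Hilger delta-integral yields (i); pulling the constant through $(\gamma f)(t)\,t^{\alpha-1} = \gamma\,[f(t)\,t^{\alpha-1}]$ and using homogeneity of the delta-integral yields (ii). Items (iii) and (iv) then follow at once from the Cauchy definition $\int_a^b f(t)\,\Delta^\alpha t = F_\alpha(b)-F_\alpha(a)$: identity (iii) is just $F_\alpha(b)-F_\alpha(a) = -[F_\alpha(a)-F_\alpha(b)]$, while (iv) is the telescoping relation $F_\alpha(b)-F_\alpha(a) = [F_\alpha(c)-F_\alpha(a)] + [F_\alpha(b)-F_\alpha(c)]$; neither requires anything beyond the definition of the antiderivative $F_\alpha$.

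The only item carrying genuine content is (v). Here I would exploit the positivity of the weight: for $t>0$ and $\alpha\in(0,1]$ one has $t^{\alpha-1}>0$, so the hypothesis $|f(t)|\le g(t)$ upgrades to the pointwise bound $|f(t)|\,t^{\alpha-1}\le g(t)\,t^{\alpha-1}$. Combining the triangle-inequality form of the delta-integral with its monotonicity then gives
\[
\left|\int_a^b f(t)\,\Delta^\alpha t\right|
= \left|\int_a^b f(t)\,t^{\alpha-1}\,\Delta t\right|
\le \int_a^b |f(t)|\,t^{\alpha-1}\,\Delta t
\le \int_a^b g(t)\,t^{\alpha-1}\,\Delta t
= \int_a^b g(t)\,\Delta^\alpha t .
\]

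The point to watch is precisely this sign condition on the weight: were $t$ allowed to be negative, or $\alpha$ to exceed $1$, the factor $t^{\alpha-1}$ could change sign and the comparison step would break down. Since the BHT framework is set up for $t>0$ and $\alpha\in(0,1]$, no such difficulty arises, and the chief obstacle reduces to bookkeeping — confirming rd-continuity of the weighted integrands so that each delta-integral used above is legitimate, and keeping the orientation $a\le b$ in (v) consistent with the monotonicity invoked.
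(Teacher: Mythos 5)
Your proposal is correct, and it is essentially the canonical argument: this theorem is stated in the paper without any proof, being recalled (``Cf.~Theorem~31 of \cite{MyID:324}'') from the BHT paper, whose proof proceeds exactly as yours does---unfolding $\int f(t)\,\Delta^{\alpha}t=\int f(t)\,t^{\alpha-1}\,\Delta t$ and the Cauchy definition $\int_a^b f(t)\,\Delta^{\alpha}t=F_{\alpha}(b)-F_{\alpha}(a)$, and then quoting linearity, additivity, and monotonicity of the Hilger delta-integral. Your explicit use of the positivity of the weight $t^{\alpha-1}$ for $t>0$ and $\alpha\in(0,1]$ in item (v) is exactly the point that legitimizes the comparison step, so nothing is missing.
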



\section{Main Results}
\label{sec:MR}

The chain rule, as we know it from the classical
differential calculus, does not hold for the
BHT fractional calculus. A simple example of this
fact has been given in \cite[Example~20]{MyID:324}.
Moreover, it has been shown in \cite[Theorem~21]{MyID:324},
using the mean value theorem, that if
$g:\mathbb{T}\rightarrow\mathbb{R}$ is continuous
and fractional differentiable of order $\alpha \in (0, 1]$
at $t \in \mathbb{T}^\kappa$ and
$f:\mathbb{R}\rightarrow\mathbb{R}$
is continuously differentiable, then there exists
$c \in [t, \sigma(t)]$ such that
$T_\alpha(f\circ g)(t) = f'(g(c)) T_\alpha(g)(t)$.
In Section~\ref{sec:3.1}, we provide two other chain rules.
Then, in Section~\ref{sec:3.2}, we prove some fractional
integral inequalities on time scales.


\subsection{Fractional chain rules on time scales}
\label{sec:3.1}

\begin{thm}[Chain Rule I]
\label{CR}
Let $f:\mathbb{R}\rightarrow\mathbb{R}$ be continuously differentiable,
$\mathbb{T}$ be a given time scale and $g:\mathbb{T}\rightarrow\mathbb{R}$
be $\alpha$-fractional differentiable. Then,
$f\circ g: \mathbb{T}\rightarrow\mathbb{R}$ is also
$\alpha$-fractional differentiable with
\begin{equation}
\label{eq:CR}
T_\alpha(f\circ g)(t)=\Bigg[\int_0^1 f'\left(g(t)
+h\mu(t)t^{\alpha-1}T_\alpha(g)(t)\right)dh\Bigg]
T_\alpha(g)(t).
\end{equation}
\end{thm}

\begin{proof}
We begin by applying the ordinary substitution rule from calculus:
\begin{align*}
f(g(\sigma(t)))-f(g(s))
&= \int_{g(s)}^{g(\sigma(t))}f'(\tau)d\tau\\
&=[g(\sigma(t))-g(s)]\int_0^1 f'(hg(\sigma(t))+(1-h)g(s))dh.
\end{align*}
Let $t\in {\mathbb{T}}^\kappa$ and $\epsilon>0$.
Since $g$ is $\alpha$-fractional differentiable at $t$,
we know from Definition~\ref{def:fd:ts} that
there exists a neighbourhood $U_1$ of $t$ such that
$$
\left|[g(\sigma(t)) - g(s)]t^{1-\alpha} 
- T_\alpha(g)(t) (\sigma(t)- s)\right|
\leq \epsilon^{*}|\sigma(t)-s|
\quad \text{ for all } s \in U_1,
$$
where
$$
\epsilon^{*} = \dfrac{\epsilon}{\displaystyle
1+2\int_0^1 \left|f'(hg(\sigma(t))+(1-h)g(t))\right|dh}.
$$
Moreover, $f'$ is continuous on $\mathbb{R}$ and, therefore,
it is uniformly continuous on closed subsets of $\mathbb{R}$.
Observing that $g$ is also continuous,
because it is $\alpha$-fractional differentiable
(see item (i) of Theorem~4 in \cite{MyID:324}),
there exists a neighbourhood $U_2$ of $t$ such that
$$
|f'(hg(\sigma(t))+(1-h)g(s)) - f'(hg(\sigma(t))+(1-h)g(t))|
\leq \frac{\epsilon}{2(\epsilon^{*}+|T_{\alpha}(g)(t)|)}
$$
for all $s\in U_2$. To see this, note that
\begin{align*}
|hg(\sigma(t))+(1-h)g(s)- (hg(\sigma(t))+(1-h)g(t))|&= (1-h)|g(s)-g(t)|\\
&\leq |g(s)-g(t)|
\end{align*}
holds for all $0\leq h\leq 1$. We then define
$U:=U_1\cap U_2$ and let $s\in U$. For convenience, we put
$$
\gamma = hg(\sigma(t))+(1-h)g(s) 
\quad \text{ and } \quad
\beta = hg(\sigma(t))+(1-h)g(t).
$$
Then we have
\begin{align*}
&\Bigg|[(f\circ g)(\sigma(t)) - (f\circ g)(s)]t^{1-\alpha}
- T_\alpha(g)(t) (\sigma(t)- s)\int_0^1 f'(\beta)dh\Bigg|\\
&=\Bigg|t^{1-\alpha}[g(\sigma(t))-g(s)]\int_0^1 f'(\gamma)dh
-  T_\alpha(g)(t) (\sigma(t)- s)\int_0^1 f'(\beta)dh \Bigg|\\
&=\Bigg|\Big(t^{1-\alpha}[g(\sigma(t))-g(s)]
- (\sigma(t)-s)T_{\alpha}(g)(t)  \Big)\\
& \quad \times \int_0^1 f'(\gamma)dh
+ T_\alpha(g)(t) (\sigma(t)- s)\int_0^1 (f'(\gamma)-f'(\beta))dh\Bigg|\\
&\leq \Big|t^{1-\alpha}[g(\sigma(t))-g(s)]
- (\sigma(t)-s)T_{\alpha}(g)(t)  \Big|\int_0^1 |f'(\gamma)|dh \\
&\quad + \big|T_\alpha(g)(t)\big|| \sigma(t)- s|\int_0^1 |f'(\gamma)-f'(\beta)|dh\\
&\leq \epsilon^{*}|\sigma(t)-s| \int_0^1 |f'(\gamma)|dh
+ \big[\epsilon^{*}+ \big|T_\alpha(g)(t)\big|\big]| \sigma(t)
- s|\int_0^1 |f'(\gamma)-f'(\beta)|dh\\
&\leq \frac{\epsilon}{2}|\sigma(t)-s| + \frac{\epsilon}{2}|\sigma(t)-s|\\
&=\epsilon|\sigma(t)-s|.
\end{align*}
Therefore, $f\circ g$ is $\alpha$-fractional differentiable at $t$
and \eqref{eq:CR} holds.
\end{proof}

Let us illustrate Theorem~\ref{CR} with an example.

\begin{ex}
Let $g:\mathbb{Z}\rightarrow\mathbb{R}$
and $f:\mathbb{R}\rightarrow\mathbb{R}$
be defined by
$$
g(t)=t^2 \text{ and } f(t)=e^{t}.
$$
Then,
$T_{\alpha}(g)(t)=(2t+1)t^{1-\alpha} \text{ and } f'(t)=e^t$.
Hence, we have by Theorem~\ref{CR} that
\begin{align*}
T_\alpha(f\circ g)(t)
&=\Bigg[\int_0^1 f'(g(t)+h\mu(t)t^{\alpha-1}T_\alpha(g)(t))dh\Bigg]T_\alpha(g)(t)\\
&= (2t+1)t^{1-\alpha}\int_0^1 e^{t^2+h(2t+1)}dh\\
&=(2t+1)t^{1-\alpha}e^{t^2}\int_0^1 e^{h(2t+1)}dh\\
&=(2t+1)t^{1-\alpha}e^{t^2}\frac{1}{2t+1}\big[ e^{2t+1}-1\big]\\
&=t^{1-\alpha}e^{t^2}\big[ e^{2t+1} - 1\big].
\end{align*}
\end{ex}

\begin{thm}[Chain Rule II]
\label{thm:CR2}
Let $\mathbb{T}$ be a time scale.
Assume $\nu:\mathbb{T}\rightarrow \mathbb{R}$ is strictly increasing
and $\tilde{\mathbb{T}}:=\nu(\mathbb{T})$ is also a time scale.
Let $w:\tilde{\mathbb{T}}\rightarrow \mathbb{R}$, $\alpha\in (0, 1]$,
and $\tilde{T_{\alpha}}$ denote the $\alpha$-fractional derivative
on $\tilde{\mathbb{T}}$. If for each $t\in {\mathbb{T}}^\kappa$,
$\tilde{T_{\alpha}}(w)(\nu(t))$ exists and for every $\epsilon>0$, 
there is a neighbourhood $V$ of $t$ such that
$$
|\tilde{\sigma}(\nu(t))-\nu(s) - T_\alpha(\nu)(t) (\sigma(t)- s)|
\leq \epsilon|\sigma(t)-s| 
\quad \text{ for all } s\in V,
$$
where $\tilde{\sigma}$ denotes the forward jump operator
on $\tilde{\mathbb{T}}$, then
$$
T_{\alpha}(w\circ \nu)(t)
=\big[\tilde{T_{\alpha}}(w)\circ \nu\big](t)
T_{\alpha}(\nu)(t).
$$
\end{thm}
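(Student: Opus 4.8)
My plan is to verify the definition of the $\alpha$-fractional derivative directly for the composition $w \circ \nu$ at a point $t \in \mathbb{T}^\kappa$. The target value is $\big[\tilde{T_\alpha}(w) \circ \nu\big](t)\, T_\alpha(\nu)(t)$, so I need to control the quantity
$$
\big|[(w\circ\nu)(\sigma(t)) - (w\circ\nu)(s)]\,t^{1-\alpha} - \tilde{T_\alpha}(w)(\nu(t))\,T_\alpha(\nu)(t)\,(\sigma(t)-s)\big|
$$
and show it is bounded by $\epsilon|\sigma(t)-s|$ on a suitable neighbourhood of $t$. The first observation is that since $\nu$ is strictly increasing and maps $\mathbb{T}$ onto the time scale $\tilde{\mathbb{T}}$, the forward jump commutes with $\nu$ in the sense that $\nu(\sigma(t)) = \tilde{\sigma}(\nu(t))$; this lets me rewrite $(w\circ\nu)(\sigma(t)) = w(\tilde\sigma(\nu(t)))$, which is exactly the form appearing in the definition of $\tilde{T_\alpha}(w)$ on $\tilde{\mathbb{T}}$.

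The core of the argument is a two-term split. First I would apply the definition of $\tilde{T_\alpha}(w)$ at the point $\nu(t)$: for any $\epsilon_1 > 0$ there is a $\delta$-neighbourhood $\tilde V$ of $\nu(t)$ in $\tilde{\mathbb{T}}$ such that for $r \in \tilde V$,
$$
\big|[w(\tilde\sigma(\nu(t))) - w(r)]\,\nu(t)^{1-\alpha} - \tilde{T_\alpha}(w)(\nu(t))\,(\tilde\sigma(\nu(t)) - r)\big| \leq \epsilon_1 |\tilde\sigma(\nu(t)) - r|.
$$
Setting $r = \nu(s)$ (which lands in $\tilde V$ for $s$ close enough to $t$, by continuity of $\nu$, which follows from its being $\alpha$-fractional differentiable), this estimates the \emph{$w$-increment} in terms of the \emph{$\nu$-increment} $\tilde\sigma(\nu(t)) - \nu(s)$. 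The second ingredient is the hypothesis supplied in the theorem statement, namely that $\tilde\sigma(\nu(t)) - \nu(s)$ is itself well-approximated by $T_\alpha(\nu)(t)(\sigma(t)-s)$ up to an error $\epsilon|\sigma(t)-s|$. Chaining these two approximations and using the triangle inequality produces the desired bound; the factor $t^{1-\alpha}$ versus $\nu(t)^{1-\alpha}$ and the various error terms are absorbed by choosing $\epsilon_1$ and the neighbourhoods appropriately.

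The main obstacle I anticipate is bookkeeping the mismatch between the two fractional normalizations: the derivative on $\mathbb{T}$ carries the weight $t^{1-\alpha}$, while the derivative on $\tilde{\mathbb{T}}$ carries $\nu(t)^{1-\alpha}$, and these do \emph{not} cancel automatically. I expect the hypothesis on $\nu$ in the statement---phrased directly as an approximation of $\tilde\sigma(\nu(t))-\nu(s)$ by $T_\alpha(\nu)(t)(\sigma(t)-s)$ \emph{without} any $t^{1-\alpha}$ or $\nu(t)^{1-\alpha}$ weight---is precisely what is needed to reconcile the two scalings, so that when the two estimates are combined the weights align. I would therefore be careful to track exactly where each power appears, factor the error into a term controlled by $\tilde{T_\alpha}(w)(\nu(t))$ times the $\nu$-approximation error and a term controlled by $\epsilon_1$ times $|\tilde\sigma(\nu(t))-\nu(s)|$, and then bound the latter increment back in terms of $|\sigma(t)-s|$ using the same $\nu$-hypothesis. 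Once both pieces are shown to be at most a constant multiple of $\epsilon|\sigma(t)-s|$, rescaling $\epsilon$ at the outset finishes the proof.
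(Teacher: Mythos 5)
Your overall architecture matches the paper's proof: rewrite $(w\circ\nu)(\sigma(t))$ as $w(\tilde{\sigma}(\nu(t)))$, split the error for $w\circ\nu$ into one piece controlled by the defining estimate of $\tilde{T_{\alpha}}(w)$ at $\nu(t)$ and another controlled by the stated hypothesis on $\nu$, then close with the triangle inequality after calibrating $\epsilon$ at the outset (the paper takes $\epsilon^{*}=\epsilon\big[1+|T_{\alpha}(\nu)(t)|+|\tilde{T_{\alpha}}(w)(\nu(t))|\big]^{-1}$ and works on $U_1\cap\nu^{-1}(U_2)$, avoiding even your appeal to continuity of $\nu$). However, there is a genuine gap at exactly the point you flag as the ``main obstacle,'' and your conjectured resolution of it is false. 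The hypothesis on $\nu$ is weight-free: it replaces $\tilde{\sigma}(\nu(t))-\nu(s)$ by $T_{\alpha}(\nu)(t)(\sigma(t)-s)$ up to $\epsilon|\sigma(t)-s|$, but it contains no power of $t$ or of $\nu(t)$, so it cannot reconcile the two normalizations. If you carry out your two-step chaining using Definition~\ref{def:fd:ts} verbatim on $\tilde{\mathbb{T}}$, whose weight at the point $\nu(t)$ is $(\nu(t))^{1-\alpha}$, you obtain
$$
\big[w(\tilde{\sigma}(\nu(t)))-w(\nu(s))\big]\,t^{1-\alpha}
=\left(\frac{t}{\nu(t)}\right)^{1-\alpha}
\tilde{T_{\alpha}}(w)(\nu(t))\,T_{\alpha}(\nu)(t)\,(\sigma(t)-s)+E(s),
$$
where $|E(s)|\leq C\epsilon|\sigma(t)-s|$ with $C$ depending only on $t$ (bound $|\tilde{\sigma}(\nu(t))-\nu(s)|\leq(|T_{\alpha}(\nu)(t)|+\epsilon)|\sigma(t)-s|$ as usual). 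Thus the number your argument identifies as $T_{\alpha}(w\circ\nu)(t)$ is $\tilde{T_{\alpha}}(w)(\nu(t))\,T_{\alpha}(\nu)(t)\,(t/\nu(t))^{1-\alpha}$. The factor $(t/\nu(t))^{1-\alpha}$ multiplies the main term, not the error; it survives every choice of $\epsilon_1$ and of neighbourhoods, and equals $1$ only in special cases such as $\alpha=1$ or $\nu(t)=t$. Executed faithfully, your plan proves a formula different from the one claimed.

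What the paper does at this juncture is the step your proposal is missing: it states the existence of $\tilde{T_{\alpha}}(w)(\nu(t))$ through the estimate
$$
\big|[w(\tilde{\sigma}(\nu(t)))-w(r)]\,t^{1-\alpha}
-\tilde{T_{\alpha}}(w)(\nu(t))\,(\tilde{\sigma}(\nu(t))-r)\big|
\leq\epsilon^{*}|\tilde{\sigma}(\nu(t))-r|
\quad\text{for all } r\in U_2,
$$
i.e., with the weight $t^{1-\alpha}$ of the \emph{original} variable rather than $(\nu(t))^{1-\alpha}$. With that reading of the hypothesis the two-term split closes exactly and no reconciliation of weights is ever needed. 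Note that this is not the estimate Definition~\ref{def:fd:ts}, applied literally on the time scale $\tilde{\mathbb{T}}$ at the point $\nu(t)$, would hand you; it is an interpretation of ``$\tilde{T_{\alpha}}(w)(\nu(t))$ exists'' that builds the weight $t^{1-\alpha}$ into the assumption. To complete your proof you must either adopt this modified hypothesis explicitly, or accept the extra factor $(t/\nu(t))^{1-\alpha}$ in the conclusion. Your instinct that the two scalings ``do not cancel automatically'' was exactly right; the error lies in hoping that the $\nu$-hypothesis repairs it.
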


\begin{proof}
Let $0<\epsilon<1$ be given and define
$\epsilon^{*}:=\epsilon\Big[1+ \big|T_{\alpha}(\nu)(t)\big|
+ \big| \tilde{T_{\alpha}}(w)(\nu(t))\big|  \Big]^{-1}$.
Note that $0<\epsilon^{*}<1$. According to the assumptions,
there exist neighbourhoods $U_1$ of $t$ and $U_2$ of $\nu(t)$ such that
$$
|\tilde{\sigma}(\nu(t))-\nu(s) - T_\alpha(\nu)(t) (\sigma(t)- s)|
\leq \epsilon^{*}|\sigma(t)-s|
$$
for all $s\in U_1$ and
$$
\big|[w(\tilde{\sigma}(\nu(t))) - w(r)]t^{1-\alpha}
- \tilde{T_{\alpha}}(w)(\nu(t)) (\tilde{\sigma}(\nu(t))- r)\big|
\leq \epsilon^{*}|\tilde{\sigma}(\nu(t))-r|
$$
for all $r\in U_2$. Let $U := U_1\cap \nu^{-1}(U_2)$. 
For any $s\in U$, we have that $s\in U_1$ and $\nu(s)\in U_2$ with
\begin{align*}
\Big|&[w(\nu(\sigma(t))) - w(\nu(s))]t^{1-\alpha}
- (\sigma(t)- s) \big[\tilde{T_{\alpha}}(w)(\nu(t))\big]
T_{\alpha}(\nu)(t)\Big|\\
&=\Big|[w(\nu(\sigma(t))) - w(\nu(s))]t^{1-\alpha}
-[\tilde{\sigma}(\nu(t))-\nu(s)]\tilde{T_{\alpha}}(w)(\nu(t))\\
&\quad + [\tilde{\sigma}(\nu(t))-\nu(s)
- T_\alpha(\nu)(t) (\sigma(t)- s)]\tilde{T_{\alpha}}(w)(\nu(t))\Big|\\
&\leq \epsilon^{*}|\tilde{\sigma}(\nu(t))-\nu(s)|
+  \epsilon^{*}|\sigma(t)-s||\tilde{T_{\alpha}}(w)(\nu(t))|\\
&\leq \epsilon^{*}\Big[|\tilde{\sigma}(\nu(t))-\nu(s)
-(\sigma(t)-s)T_{\alpha}(\nu)(t)|\\
&\quad + |\sigma(t)-s||T_{\alpha}(\nu)(t)|
+  |\sigma(t)-s||\tilde{T_{\alpha}}(w)(\nu(t))|\Big]\\
\end{align*}
\begin{align*}
&\leq \epsilon^{*}\Big[\epsilon^{*}|\sigma(t)-s|
+|\sigma(t)-s||T_{\alpha}(\nu)(t)|
+ |\sigma(t)-s||\tilde{T_{\alpha}}(w)(\nu(t))| \Big]\\
&= \epsilon^{*}|\sigma(t)-s|\Big[\epsilon^{*}+|T_{\alpha}(\nu)(t)|
+ |\tilde{T_{\alpha}}(w)(\nu(t))| \Big]\\
&\leq \epsilon^{*}\Big[1+|T_{\alpha}(\nu)(t)|
+ |\tilde{T_{\alpha}}(w)(\nu(t))| \Big]|\sigma(t)-s|\\
&=\epsilon |\sigma(t)-s|.
\end{align*}
This proves the claim.
\end{proof}


\subsection{Fractional integral inequalities on time scales}
\label{sec:3.2}

The $\alpha$-fractional integral on time scales
was introduced in \cite[Section~3]{MyID:324},
where some basic properties were proved.
Here we show that the $\alpha$-fractional integral
satisfies appropriate fractional versions of the
fundamental inequalities of H\"{o}lder,
Cauchy--Schwarz, Minkowski, Jensen and Hermite--Hadamard.

\begin{thm}[H\"{o}lder's fractional inequality on time scales]
\label{thm:Hineq}
Let $\alpha\in (0, 1]$ and $a, b\in\mathbb{T}$.
If $f, g, h :[a, b]\rightarrow\mathbb{R}$
are $rd$-continuous, then
\begin{equation}
\label{eq:Hineq}
\int_a^b |f(t)g(t)| |h(t)| \Delta^{\alpha}t
\leq \Bigg[\int_a^b |f(t)|^p |h(t)| \Delta^{\alpha}t\Bigg]^{\frac{1}{p}}\Bigg[
\int_a^b |g(t)|^q |h(t)| \Delta^{\alpha}t\Bigg]^{\frac{1}{q}},
\end{equation}
where $p>1$ and $\frac{1}{p} + \frac{1}{q} = 1$.
\end{thm}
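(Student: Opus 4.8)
The plan is to mimic the classical proof of Hölder's inequality, which rests on Young's inequality, and to transport it to the $\alpha$-fractional integral setting using the linearity and monotonicity properties recorded in Theorem~\ref{Int-Proprty}. Recall Young's inequality: for nonnegative reals $A,B$ and conjugate exponents $p,q$ (with $\tfrac{1}{p}+\tfrac{1}{q}=1$), one has $AB\le \tfrac{A^p}{p}+\tfrac{B^q}{q}$. The strategy is to apply this pointwise to suitably normalized versions of $|f|$ and $|g|$, then integrate.

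First I would dispose of the degenerate case. Set
$$
P:=\Bigg[\int_a^b |f(t)|^p |h(t)|\,\Delta^{\alpha}t\Bigg]^{1/p},
\qquad
Q:=\Bigg[\int_a^b |g(t)|^q |h(t)|\,\Delta^{\alpha}t\Bigg]^{1/q}.
$$
If either $P=0$ or $Q=0$, the right-hand side of \eqref{eq:Hineq} vanishes and the inequality holds trivially (the left-hand side being a nonnegative $\alpha$-integral bounded above by zero), so I would assume $P>0$ and $Q>0$. Then, for each $t\in[a,b]$, I would apply Young's inequality with
$$
A=\frac{|f(t)|}{P},\qquad B=\frac{|g(t)|}{Q},
$$
obtaining the pointwise bound
$$
\frac{|f(t)|}{P}\cdot\frac{|g(t)|}{Q}
\le \frac{1}{p}\,\frac{|f(t)|^p}{P^p}
+\frac{1}{q}\,\frac{|g(t)|^q}{Q^q}.
$$

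Next I would multiply both sides by the nonnegative weight $|h(t)|$ and take the $\alpha$-fractional integral from $a$ to $b$. Using the monotonicity property (item (v) of Theorem~\ref{Int-Proprty}, applied to nonnegative functions) together with linearity (items (i) and (ii)), the integral of the right-hand side becomes
$$
\frac{1}{p\,P^p}\int_a^b |f(t)|^p|h(t)|\,\Delta^{\alpha}t
+\frac{1}{q\,Q^q}\int_a^b |g(t)|^q|h(t)|\,\Delta^{\alpha}t
=\frac{1}{p}+\frac{1}{q}=1,
$$
by the very definitions of $P$ and $Q$. The left-hand side equals $\tfrac{1}{PQ}\int_a^b |f(t)g(t)||h(t)|\,\Delta^{\alpha}t$, so rearranging yields exactly \eqref{eq:Hineq}.

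I expect the only genuine subtlety to be justifying that the relevant $\alpha$-fractional integrals are well defined and that the monotonicity step is legitimate. The integrands $|f|^p|h|$, $|g|^q|h|$, and $|fg||h|$ are continuous composites of $rd$-continuous functions, hence $rd$-continuous, so their $\alpha$-integrals exist; and since these integrands are all nonnegative, item (v) of Theorem~\ref{Int-Proprty} (with the dominated function itself as the dominating $g$) gives the needed order-preservation under the $\alpha$-integral. The remaining computations are purely the classical Young/Hölder manipulation carried through unchanged, so no deeper time-scale machinery is required.
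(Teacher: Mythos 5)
Your proposal is correct and is essentially the paper's own proof: the paper applies the same Young-type inequality $A^{1/p}B^{1/q}\le \tfrac{A}{p}+\tfrac{B}{q}$ to the normalized functions $A(t)=|f(t)|^p|h(t)|\big/\int_a^b|f|^p|h|\,\Delta^{\alpha}\tau$ and $B(t)=|g(t)|^q|h(t)|\big/\int_a^b|g|^q|h|\,\Delta^{\alpha}\tau$, integrates using Theorem~\ref{Int-Proprty}, and obtains the bound $\tfrac{1}{p}+\tfrac{1}{q}=1$, exactly as you do (your placement of the weight $|h|$ outside the normalized functions rather than inside them is only a cosmetic difference). The paper dismisses the degenerate case with a bare ``without loss of generality,'' so your brief justification of that case is, if anything, slightly more explicit than the original.
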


\begin{proof}
For nonnegative real numbers $A$ and $B$, the basic inequality
$$
A^{1/p}B^{1/q} \leq \frac{A}{p}+\frac{B}{q}
$$
holds. Now, suppose, without loss of generality, that
$$
\Bigg[\int_a^b |f(t)|^p |h(t)|\Delta^{\alpha}t\Bigg]
\Bigg[\int_a^b |g(t)|^q |h(t)|\Delta^{\alpha}t\Bigg]
\neq 0.
$$
Applying Theorem~\ref{Int-Proprty} and the above inequality to
$$
A(t)=\frac{|f(t)|^p |h(t)|}{\int_a^b|f(\tau)|^p |h(\tau)|\Delta^{\alpha}\tau}
$$
and
$$
B(t)=\frac{|g(t)|^q |h(t)|}{\int_a^b|g(\tau)|^p |h(\tau)| \Delta^{\alpha}\tau},
$$
and integrating the obtained inequality between $a$ and $b$,
which is possible since all occurring functions are $rd$-continuous,
we find that
\begin{align*}
\int_a^b & [A(t)]^{1/p}[B(t)]^{1/q}\Delta^{\alpha}t\\
&= \int_a^b \frac{|f(t)| |h(t)|^{1/p}}{\Big[\int_a^b|f(\tau)|^p|h(\tau)|\Delta^{\alpha}\tau\Big]^{1/p}}
\frac{|g(t)| |h(t)|^{1/q}}{\Big[\int_a^b|g(\tau)|^q |h(\tau)|\Delta^{\alpha}\tau\Big]^{1/q}}\Delta^{\alpha}t\\
&\leq \int_a^b\Bigg[\frac{A(t)}{p}+\frac{B(t)}{q}\Bigg]\Delta^{\alpha}t\\
&=\int_a^b\Bigg[\frac{1}{p}\frac{|f(t)|^p |h(t)|}{\int_a^b|f(\tau)|^p |h(\tau)|\Delta^{\alpha}\tau}
+\frac{1}{q}\frac{|g(t)|^q |h(t)|}{\int_a^b|g(\tau)|^q |h(\tau)|\Delta^{\alpha}\tau}\Bigg]\Delta^{\alpha}t\\
&=\frac{1}{p}\int_a^b\Bigg[\frac{|f(t)|^p |h(t)|}{\int_a^b
|f(\tau)|^p |h(\tau)|\Delta^{\alpha}\tau}\Bigg]\Delta^{\alpha}t
+\frac{1}{q}\int_a^b\Bigg[\frac{|g(t)|^q |h(t)|}{\int_a^b
|g(\tau)|^q |h(\tau)|\Delta^{\alpha}\tau}\Bigg]\Delta^{\alpha}t\\
&\leq \frac{1}{p}+\frac{1}{q}\\
&=1.
\end{align*}
This directly yields the H\"{o}lder inequality \eqref{eq:Hineq}.
\end{proof}

As a particular case of Theorem~\ref{thm:Hineq},
we obtain the following inequality.

\begin{thm}[Cauchy--Schwarz's fractional inequality on time scales]
\label{thm:CSineq}
Let $\alpha\in (0, 1]$ and $a, b\in\mathbb{T}$.
If $f, g, h :[a, b]\rightarrow\mathbb{R}$ are $rd$-continuous, then
$$
\int_a^b |f(t)g(t)| |h(t)| \Delta^{\alpha}t
\leq \sqrt{\Bigg[\int_a^b |f(t)|^2 |h(t)| \Delta^{\alpha}t\Bigg]
\Bigg[\int_a^b |g(t)|^2 |h(t)| \Delta^{\alpha}t\Bigg]}.
$$
\end{thm}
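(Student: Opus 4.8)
The plan is to obtain this inequality as the special case $p = q = 2$ of H\"{o}lder's fractional inequality (Theorem~\ref{thm:Hineq}), exactly as the phrasing ``as a particular case'' anticipates. First I would check admissibility of the exponents: choosing $p = 2$ satisfies $p > 1$, and the conjugacy relation $\frac{1}{p} + \frac{1}{q} = 1$ then forces $\frac{1}{q} = \frac{1}{2}$, that is $q = 2$. Hence $(p,q) = (2,2)$ is an admissible H\"{o}lder pair. The hypotheses needed by Theorem~\ref{thm:Hineq}, namely $\alpha \in (0,1]$, $a,b \in \mathbb{T}$, and $f, g, h : [a,b] \to \mathbb{R}$ being $rd$-continuous, are literally the hypotheses assumed here, so no extra verification is required.

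Substituting these exponents directly into \eqref{eq:Hineq} gives
$$\int_a^b |f(t)g(t)| |h(t)| \Delta^{\alpha}t \leq \Bigg[\int_a^b |f(t)|^2 |h(t)| \Delta^{\alpha}t\Bigg]^{1/2}\Bigg[\int_a^b |g(t)|^2 |h(t)| \Delta^{\alpha}t\Bigg]^{1/2}.$$
The only remaining step is cosmetic: the two bracketed $\alpha$-fractional integrals are nonnegative, since each integrand is a product of nonnegative quantities. Indeed, for a nonnegative $rd$-continuous $\psi$ one has $\psi = |\psi| \leq \psi$, so Theorem~\ref{Int-Proprty}(v) yields $\big|\int_a^b \psi\, \Delta^\alpha t\big| \leq \int_a^b \psi\, \Delta^\alpha t$, which combined with the trivial reverse bound forces $\int_a^b \psi\, \Delta^\alpha t \geq 0$. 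With both factors nonnegative, the product of their square roots equals the square root of their product, and the right-hand side rewrites into the claimed form $\sqrt{[\,\cdots\,][\,\cdots\,]}$.

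Honestly, there is no substantive obstacle here: the statement is a pure specialization of the preceding theorem, and no new analytic input on time scales is needed. The single point one should not gloss over is the nonnegativity of the two factors, which is what legitimizes passing from a product of square roots to the square root of a product; as indicated above, this follows at once from the monotonicity property recorded in Theorem~\ref{Int-Proprty}(v).
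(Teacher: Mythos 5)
Your proposal is correct and is exactly the paper's proof: the paper simply says ``Choose $p=q=2$ in H\"older's inequality \eqref{eq:Hineq}.'' Your extra remark justifying $\sqrt{A}\,\sqrt{B}=\sqrt{AB}$ via nonnegativity of the two $\alpha$-fractional integrals is a harmless (and valid) addition that the paper leaves implicit.
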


\begin{proof}
Choose $p=q=2$ in H\"{o}lder's inequality \eqref{eq:Hineq}.
\end{proof}

Using H\"{o}lder's inequality \eqref{eq:Hineq}, we can also
prove the following result.

\begin{cor}
Let $\alpha\in (0, 1]$ and $a, b\in\mathbb{T}$.
If $f, g, h :[a, b]\rightarrow\mathbb{R}$ are $rd$-continuous, then
$$
\int_a^b |f(t)g(t)| |h(t)| \Delta^{\alpha}t
\geq \Bigg[\int_a^b |f(t)|^p |h(t)| \Delta^{\alpha}t\Bigg]^{\frac{1}{p}}
\Bigg[\int_a^b |g(t)|^q |h(t)| \Delta^{\alpha}t\Bigg]^{\frac{1}{q}},
$$
where $\frac{1}{p}+\frac{1}{q}=1$ and $p < 0$ or $q < 0$.
\end{cor}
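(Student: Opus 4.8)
The plan is to derive this reverse inequality directly from the (forward) H\"{o}lder inequality of Theorem~\ref{thm:Hineq} by a judicious choice of exponents, followed by an algebraic rearrangement. By the symmetry of the statement in the pairs $(f,p)$ and $(g,q)$, I may assume without loss of generality that $p<0$. The conjugacy relation $\frac1p+\frac1q=1$ then forces $\frac1q=1-\frac1p>1$, so that $0<q<1$ and $\frac1q>1$. As in the classical case, I assume that the occurring integrals are finite and that $\int_a^b|f(t)|^p|h(t)|\,\Delta^\alpha t$ is strictly positive, so that the negative power $|f|^p$ is meaningful and the division to come is legitimate.

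The key step is to apply Theorem~\ref{thm:Hineq} to a factored integrand, keeping the weight $|h|$ intact. Writing $|g|^q=|fg|^q\,|f|^{-q}$, I observe that
$$
\int_a^b |g(t)|^q |h(t)|\,\Delta^\alpha t
=\int_a^b |f(t)g(t)|^{q}\,|f(t)|^{-q}\,|h(t)|\,\Delta^\alpha t .
$$
Applying the forward H\"{o}lder inequality to the two functions $|fg|^q$ and $|f|^{-q}$, with weight $|h|$ and the conjugate exponents $\frac1q>1$ and $\frac{1}{1-q}$ (which are indeed conjugate, since $q+(1-q)=1$, and both exceed $1$ because $0<q<1$), gives
$$
\int_a^b |g(t)|^q |h(t)|\,\Delta^\alpha t
\le \Bigg[\int_a^b |f(t)g(t)|\,|h(t)|\,\Delta^\alpha t\Bigg]^{q}
\Bigg[\int_a^b |f(t)|^{-q/(1-q)} |h(t)|\,\Delta^\alpha t\Bigg]^{1-q}.
$$

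The final step is a purely algebraic identification of the exponents. From $\frac1p=1-\frac1q=\frac{q-1}{q}$ one obtains $p=\frac{q}{q-1}$, hence $-\frac{q}{1-q}=p$ and $\frac{q-1}{q}=\frac1p$. Substituting $-q/(1-q)=p$ into the previous display, then isolating $\int_a^b|fg|\,|h|\,\Delta^\alpha t$ by dividing through by the positive quantity $\big[\int_a^b|f|^p|h|\,\Delta^\alpha t\big]^{1-q}$ and raising both sides to the positive power $1/q$, yields
$$
\int_a^b |f(t)g(t)|\,|h(t)|\,\Delta^\alpha t
\ge \Bigg[\int_a^b |g(t)|^q |h(t)|\,\Delta^\alpha t\Bigg]^{1/q}
\Bigg[\int_a^b |f(t)|^p |h(t)|\,\Delta^\alpha t\Bigg]^{-(1-q)/q},
$$
which is exactly the asserted inequality, since $-(1-q)/q=\frac{q-1}{q}=\frac1p$. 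The main obstacle here is bookkeeping rather than any deep difficulty: one must confirm that the factored integrands $|fg|^q$ and $|f|^{-q}$ remain $rd$-continuous so that Theorem~\ref{thm:Hineq} genuinely applies, that the exponents $\frac1q$ and $\frac{1}{1-q}$ are correctly conjugate and both larger than $1$, and that the reindexing $-q/(1-q)=p$ together with $-(1-q)/q=1/p$ is carried out without sign errors. No time-scale machinery beyond the forward inequality and the linearity and monotonicity recorded in Theorem~\ref{Int-Proprty} is required.
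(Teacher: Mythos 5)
Your proof is correct and essentially identical to the paper's: both derive the reverse inequality from the forward H\"{o}lder inequality (Theorem~\ref{thm:Hineq}) applied to the factorization $|g(t)|^q = |f(t)g(t)|^{q}\,|f(t)|^{-q}$ with conjugate exponents $1/q$ and $1/(1-q)$ (the paper writes these as $Q = 1/q$ and $P = -p/q$, which is the same number as $1/(1-q)$), followed by the same algebraic rearrangement. The only difference is presentational: the paper records the substitution $F = f^{-q}$, $G = f^{q}g^{q}$ and leaves the exponent bookkeeping and the final division implicit, whereas you carry them out explicitly.
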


\begin{proof}
Without loss of generality, we may assume that $p < 0$ and $q > 0$.
Set $P = - \frac{p}{q}$ and $Q = \frac{1}{q}$. Then,
$\frac{1}{P}+\frac{1}{Q}=1$ with $P > 1$ and $Q > 0$. 
From \eqref{eq:Hineq} we can write that
\begin{multline}
\label{eq:Hineq:FG}
\int_a^b |F(t)G(t)| |h(t)| \Delta^{\alpha}t\\
\leq \Bigg[\int_a^b |F(t)|^P |h(t)| \Delta^{\alpha}t\Bigg]^{\frac{1}{P}}\Bigg[
\int_a^b |G(t)|^Q |h(t)| \Delta^{\alpha}t\Bigg]^{\frac{1}{Q}}
\end{multline}
for any $rd$-continuous functions $F, G:[a, b]\rightarrow\mathbb{R}$. 
The desired result is obtained by taking 
$F(t) = [f(t)]^{-q}$ and $G(t) = [f(t)]^{q} [g(t)]^{q}$
in inequality \eqref{eq:Hineq:FG}.
\end{proof}

Next, we use H\"{o}lder's inequality \eqref{eq:Hineq}
to deduce a fractional Minkowski's inequality on time scales.

\begin{thm}[Minkowski's fractional inequality on time scales]
\label{thm:Mink_ineq}
Let $\alpha\in (0, 1]$, $a, b\in\mathbb{T}$ and $p>1$.
If $f, g, h:[a, b]\rightarrow\mathbb{R}$ are $rd$-continuous, then
\begin{multline}
\label{eq:Mink_ineq}
\Bigg[\int_a^b |(f+g)(t)|^p |h(t)| \Delta^{\alpha}t\Bigg]^{1/p}\\
\leq \Bigg[\int_a^b |f(t)|^p |h(t)| \Delta^{\alpha}t\Bigg]^{\frac{1}{p}}
+\Bigg[\int_a^b |g(t)|^p |h(t)| \Delta^{\alpha}t\Bigg]^{\frac{1}{p}}.
\end{multline}
\end{thm}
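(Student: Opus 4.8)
The plan is to mimic the classical proof of Minkowski's inequality, deriving it from H\"{o}lder's inequality (Theorem~\ref{thm:Hineq}) applied to a clever splitting of the integrand. First I would dispose of the trivial case: if $\int_a^b |(f+g)(t)|^p |h(t)| \Delta^{\alpha}t = 0$, then the left-hand side of \eqref{eq:Mink_ineq} vanishes and the inequality holds since the right-hand side is nonnegative. So I may assume this integral is strictly positive.

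The key algebraic step is the pointwise decomposition
$$
|(f+g)(t)|^p = |(f+g)(t)| \, |(f+g)(t)|^{p-1}
\leq |f(t)| \, |(f+g)(t)|^{p-1} + |g(t)| \, |(f+g)(t)|^{p-1},
$$
using the triangle inequality $|(f+g)(t)| \leq |f(t)| + |g(t)|$. Multiplying by $|h(t)|$ and integrating with the $\alpha$-fractional integral, using linearity and monotonicity from Theorem~\ref{Int-Proprty}, gives
$$
\int_a^b |(f+g)(t)|^p |h(t)| \Delta^{\alpha}t
\leq \int_a^b |f(t)| \, |(f+g)(t)|^{p-1} |h(t)| \Delta^{\alpha}t
+ \int_a^b |g(t)| \, |(f+g)(t)|^{p-1} |h(t)| \Delta^{\alpha}t.
$$
To each of the two integrals on the right I would apply H\"{o}lder's inequality \eqref{eq:Hineq} with the conjugate exponents $p$ and $q = p/(p-1)$, treating $|f(t)|$ (respectively $|g(t)|$) as the first factor and $|(f+g)(t)|^{p-1}$ as the second. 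Since $q(p-1) = p$, each H\"{o}lder application produces the common factor $\left[\int_a^b |(f+g)(t)|^p |h(t)| \Delta^{\alpha}t\right]^{1/q}$, so that the right-hand side becomes
$$
\Bigg[\int_a^b |(f+g)(t)|^p |h(t)| \Delta^{\alpha}t\Bigg]^{1/q}
\Bigg(\bigg[\int_a^b |f(t)|^p |h(t)| \Delta^{\alpha}t\bigg]^{1/p}
+ \bigg[\int_a^b |g(t)|^p |h(t)| \Delta^{\alpha}t\bigg]^{1/p}\Bigg).
$$
Finally I would divide both sides by the positive quantity $\left[\int_a^b |(f+g)(t)|^p |h(t)| \Delta^{\alpha}t\right]^{1/q}$ and use $1 - 1/q = 1/p$ to obtain exactly \eqref{eq:Mink_ineq}.

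I do not expect a serious obstacle, since the whole argument is a transcription of the classical real-variable proof in which ordinary integrals are replaced by $\alpha$-fractional integrals; every structural ingredient needed (linearity, monotonicity, and H\"{o}lder) has already been established in Theorem~\ref{Int-Proprty} and Theorem~\ref{thm:Hineq}. The only point requiring a little care is verifying that the auxiliary functions appearing in the H\"{o}lder step, namely $|(f+g)(t)|^{p-1}$ and the products with $|f|$, $|g|$, $|h|$, remain $rd$-continuous so that Theorem~\ref{thm:Hineq} applies; this follows because $f, g, h$ are $rd$-continuous and compositions with continuous real functions such as $x \mapsto |x|^{p-1}$ preserve $rd$-continuity. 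The bookkeeping of the exponents, ensuring $q(p-1)=p$ and $1/p + 1/q = 1$, is the place where an arithmetic slip would most easily occur.
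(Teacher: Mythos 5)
Your proposal is correct and follows essentially the same route as the paper's proof: the triangle-inequality splitting of $|(f+g)(t)|^p$, Hölder's inequality \eqref{eq:Hineq} with $q=p/(p-1)$ applied to each resulting integral, and division by the common factor $\big[\int_a^b |(f+g)(t)|^p |h(t)|\Delta^{\alpha}t\big]^{1/q}$. In fact your treatment is slightly more careful than the paper's, since you explicitly dispose of the case where $\int_a^b |(f+g)(t)|^p |h(t)|\Delta^{\alpha}t = 0$ before dividing, a point the published proof passes over in silence.
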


\begin{proof}
We apply H\"{o}lder's inequality \eqref{eq:Hineq} with $q=p/(p-1)$ 
and items (i) and (v) of Theorem~\ref{Int-Proprty} to obtain
\begin{align*}
\int_a^b &|(f+g)(t)|^p |h(t)| \Delta^{\alpha}t\\
&=\int_a^b |(f+g)(t)|^{p-1}|(f+g)(t)| |h(t)| \Delta^{\alpha}t\\
&\leq \int_a^b |f(t)||(f+g)(t)|^{p-1} |h(t)| \Delta^{\alpha}t
+ \int_a^b |g(t)||(f+g)(t)|^{p-1} |h(t)| \Delta^{\alpha}t\\
&\leq  \Bigg[\int_a^b |f(t)|^p |h(t)| \Delta^{\alpha}t\Bigg]^{\frac{1}{p}}
\Bigg[\int_a^b |(f+g)(t)|^{(p-1)q} |h(t)| \Delta^{\alpha}t\Bigg]^{\frac{1}{q}}\\
&\quad +  \Bigg[\int_a^b |g(t)|^p |h(t)| \Delta^{\alpha}t\Bigg]^{\frac{1}{p}}
\Bigg[\int_a^b |(f+g)(t)|^{(p-1)q} |h(t)| \Delta^{\alpha}t\Bigg]^{\frac{1}{q}}\\
&=\Bigg[\int_a^b |(f+g)(t)|^p |h(t)| \Delta^{\alpha}t\Bigg]^{\frac{1}{q}}\\
&\qquad \times \Bigg(\Bigg[\int_a^b |f(t)|^p |h(t)| \Delta^{\alpha}t\Bigg]^{\frac{1}{p}}
+ \Bigg[\int_a^b |g(t)|^p |h(t)| \Delta^{\alpha}t\Bigg]^{\frac{1}{p}}\Bigg).
\end{align*}
Dividing both sides of the obtained inequality by
$\Bigg[\int_a^b |(f+g)(t)|^p |h(t)|\Delta^{\alpha}t\Bigg]^{\frac{1}{q}}$,
we arrive at the Minkowski inequality \eqref{eq:Mink_ineq}.
\end{proof}

Jensen's classical inequality relates the value of a convex/concave 
function of an integral to the integral 
of the convex/concave function. We prove a generalization
of such relation for the BHT fractional calculus on time scales.

\begin{thm}[Generalized Jensen's fractional inequality on time scales]
\label{thm:Jen} 
Let $\mathbb{T}$ be a time scale, $a, b\in\mathbb{T}$ with $a<b$,
$c, d\in\mathbb{R}$, $\alpha \in (0, 1]$, 
$g \in C\left([a,b]\cap\mathbb{T}; (c,d)\right)$
and $h \in C\left([a,b]\cap\mathbb{T}; \mathbb{R}\right)$ with
$$
\int_{a}^{b} |h(s)| \Delta^\alpha s > 0.
$$
\begin{itemize}
\item If $f \in C\left((c,d); \mathbb{R}\right)$ is convex, then
\begin{equation}
\label{eq:Jen:conv}
f\left(\frac{\int_{a}^{b} g(s) |h(s)| \Delta^\alpha s}{\int_{a}^{b} |h(s)| \Delta^\alpha s}\right)
\leq \frac{\int_{a}^{b} f(g(s)) |h(s)| \Delta^\alpha s}{\int_{a}^{b} |h(s)| \Delta^\alpha s}.
\end{equation}

\item If $f \in C\left((c,d); \mathbb{R}\right)$ is concave, then
\begin{equation}
\label{eq:Jen:conc}
f\left(\frac{\int_{a}^{b} g(s) |h(s)| \Delta^\alpha s}{\int_{a}^{b} |h(s)| \Delta^\alpha s}\right)
\geq \frac{\int_{a}^{b} f(g(s)) |h(s)| \Delta^\alpha s}{\int_{a}^{b} |h(s)| \Delta^\alpha s}.
\end{equation}
\end{itemize}
\end{thm}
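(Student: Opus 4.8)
The plan is to reduce everything to the classical supporting-line characterization of convexity, combined with the linearity and positivity of the $\alpha$-fractional integral. Throughout, abbreviate the weighted mean appearing on the left-hand side as
\[
\xi := \frac{\int_{a}^{b} g(s) |h(s)| \Delta^\alpha s}{\int_{a}^{b} |h(s)| \Delta^\alpha s}.
\]
The first task is to confirm that $\xi \in (c,d)$, so that $f(\xi)$ is even defined. Since $[a,b]\cap\mathbb{T}$ is a closed and bounded subset of $\mathbb{R}$, it is compact, and the continuous function $g$ attains a minimum $m>c$ and a maximum $M<d$ on it. As $|h(s)|\ge 0$, one has $m|h(s)| \le g(s)|h(s)| \le M|h(s)|$ pointwise; integrating and dividing by the positive quantity $\int_a^b |h(s)|\Delta^\alpha s$ gives $c < m \le \xi \le M < d$, placing $\xi$ in the \emph{open} interval.

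The positivity and monotonicity of the integral invoked above must first be extracted from the properties we are allowed to use. From item (v) of Theorem~\ref{Int-Proprty}, applied with both functions equal to a nonnegative $\phi$, we get $\big|\int_a^b \phi\,\Delta^\alpha t\big| \le \int_a^b \phi\,\Delta^\alpha t$, which forces $\int_a^b \phi\,\Delta^\alpha t \ge 0$. Combining this positivity with the linearity in items (i) and (ii) yields the monotonicity statement $\chi \le \psi \Rightarrow \int_a^b \chi\,\Delta^\alpha t \le \int_a^b \psi\,\Delta^\alpha t$, which is the only analytic tool the rest of the argument requires.

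Now, because $f$ is convex on $(c,d)$ and $\xi\in(c,d)$, there is a supporting line at $\xi$: a constant $\beta$ (a subgradient, existing since the one-sided derivatives of a convex function satisfy $f'_-(\xi)\le f'_+(\xi)$) with
\[
f(x) \ge f(\xi) + \beta\,(x-\xi) \qquad \text{for all } x\in(c,d).
\]
Substituting $x=g(s)$, multiplying by $|h(s)|\ge 0$, and integrating over $[a,b]$ in $\Delta^\alpha s$ — legitimate because $f\circ g$ is continuous, hence $rd$-continuous — monotonicity and linearity give
\[
\int_a^b f(g(s)) |h(s)|\Delta^\alpha s \ge f(\xi)\int_a^b |h(s)|\Delta^\alpha s + \beta\Big(\int_a^b g(s)|h(s)|\Delta^\alpha s - \xi\int_a^b |h(s)|\Delta^\alpha s\Big).
\]
By the very definition of $\xi$ the bracketed term vanishes; dividing by $\int_a^b |h(s)|\Delta^\alpha s>0$ delivers \eqref{eq:Jen:conv}. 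The concave case \eqref{eq:Jen:conc} then follows immediately by applying the convex result to $-f$.

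The step I expect to be genuinely delicate is not the supporting-line argument, which is classical and carries over verbatim, but rather the clean derivation of positivity and monotonicity of the $\alpha$-fractional integral from the lone inequality (v), together with the care needed in the degenerate case where $|h|$ vanishes on part of $[a,b]$. It is precisely this degeneracy that makes the compactness argument essential: without the strict bounds $c<m$ and $M<d$ coming from continuity of $g$ on a compact set, one could only conclude $\xi\in[c,d]$, and $f(\xi)$ might fail to be defined at an endpoint.
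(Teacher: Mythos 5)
Your proof is correct and follows essentially the same route as the paper's: a supporting-line (subgradient) inequality at the weighted mean $\xi$, multiplied by $|h(s)|$ and integrated, with the key term vanishing by the definition of $\xi$, and the concave case handled by passing to $-f$. The only difference is that you explicitly verify two points the paper takes for granted — that $\xi$ lies in the open interval $(c,d)$ (via compactness of $[a,b]\cap\mathbb{T}$) and that positivity/monotonicity of the $\alpha$-fractional integral follows from item (v) of Theorem~\ref{Int-Proprty} — which strengthens rather than changes the argument.
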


\begin{proof}
We start by proving \eqref{eq:Jen:conv}. Since $f$ is convex,
for any $t \in (c, d)$ there exists $a_t \in \mathbb{R}$ such that
\begin{equation}
\label{eq:ineq:conv}
a_t (x-t) \leq f(x) - f(t) \quad \text{ for all } x \in (c,d).
\end{equation}
Let
$$
t = \frac{\int_{a}^{b} g(s) |h(s)| \Delta^\alpha s}{\int_{a}^{b} |h(s)| \Delta^\alpha s}.
$$
It follows from \eqref{eq:ineq:conv} and item (v) of Theorem~\ref{Int-Proprty} that
\begin{equation*}
\begin{split}
\int_{a}^{b} f(g(s)) & |h(s)| \Delta^\alpha s
-\left(\int_{a}^{b} |h(s)| \Delta^\alpha s\right) 
f\left(\frac{\int_{a}^{b} g(s) |h(s)| \Delta^\alpha s}{\int_{a}^{b} |h(s)| \Delta^\alpha s}\right)\\
&= \int_{a}^{b} f(g(s)) |h(s)| \Delta^\alpha s
-\left(\int_{a}^{b} |h(s)| \Delta^\alpha s\right) f\left(t\right)\\
&= \int_{a}^{b} \left(f(g(s))-f(t)\right) |h(s)| \Delta^\alpha s\\
\end{split}
\end{equation*}
\begin{equation*}
\begin{split}
&\geq a_t \int_{a}^{b} \left(g(s)-t\right) |h(s)| \Delta^\alpha s\\
&= a_t \left(\int_{a}^{b} g(s) |h(s)| \Delta^\alpha s
- t \int_{a}^{b} |h(s)| \Delta^\alpha s\right)\\
&= a_t \left(\int_{a}^{b} g(s) |h(s)| \Delta^\alpha s
- \int_{a}^{b} g(s) |h(s)| \Delta^\alpha s\right)\\
&= 0.
\end{split}
\end{equation*}
This proves \eqref{eq:Jen:conv}.
To prove \eqref{eq:Jen:conc}, we simply observe that $F(x) = -f(x)$ is convex
(because we are now assuming $f$ to be concave) and then we apply 
inequality \eqref{eq:Jen:conv} to function $F$.
\end{proof}

We end with an application of Theorem~\ref{thm:Jen}.

\begin{thm}[A weighted fractional Hermite--Hadamard inequality on time scales]
\label{thm:HHineq}
Let $\mathbb{T}$ be a time scale, $a, b \in \mathbb{T}$ and $\alpha \in (0,1]$.
Let $f : [a,b] \rightarrow \mathbb{R}$ be a continuous convex function and let 
$w : \mathbb{T} \rightarrow \mathbb{R}$ be a continuous function such that
$w(t) \geq 0$ for all $t \in \mathbb{T}$ and $\int_a^b w(t) \Delta^\alpha t > 0$.
Then,
\begin{equation}
\label{eq:HHineq}
f(x_{w,\alpha}) \leq 
\frac{1}{\int_a^b w(t) \Delta^\alpha t} \int_{a}^{b} f(t) w(t) \Delta^\alpha t
\leq \frac{b - x_{w,\alpha}}{b-a} f(a) + \frac{x_{w,\alpha} - a}{b-a} f(b),
\end{equation}
where $x_{w,\alpha} = \frac{\int_{a}^{b} t w(t) \Delta^\alpha t}{\int_{a}^{b} w(t) \Delta^\alpha t}$.
\end{thm}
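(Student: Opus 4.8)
The plan is to establish the two inequalities in \eqref{eq:HHineq} separately, recognizing the left-hand inequality as a direct instance of the generalized Jensen inequality of Theorem~\ref{thm:Jen}, and the right-hand inequality as an integrated form of the secant (chord) estimate for a convex function.

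For the left inequality, I would invoke Theorem~\ref{thm:Jen} with the choices $g(s)=s$ and $h(s)=w(s)$. Since $w\geq 0$ we have $|h|=w$, the hypothesis $\int_a^b |h(s)|\Delta^\alpha s=\int_a^b w(t)\Delta^\alpha t>0$ is exactly the one assumed here, and the weighted mean of $g$ is precisely $x_{w,\alpha}$. The convexity of $f$ then yields $f(x_{w,\alpha})\leq \frac{1}{W}\int_a^b f(t)w(t)\Delta^\alpha t$, where $W:=\int_a^b w(t)\Delta^\alpha t$. The only point requiring care is that Theorem~\ref{thm:Jen} is stated for $f$ convex on an open interval $(c,d)$, whereas here $f$ is convex on the closed interval $[a,b]$; I would dispose of this by noting that $x_{w,\alpha}\in[a,b]$ and that a convex function on $[a,b]$ admits a supporting line at this point (one-sided at an endpoint), so the subgradient inequality \eqref{eq:ineq:conv} underlying the proof of Theorem~\ref{thm:Jen} remains available, or equivalently by extending $f$ affinely beyond $[a,b]$ to a convex function on an open interval.

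For the right inequality, I would start from the pointwise chord estimate: for every $t\in[a,b]$ one writes $t=\frac{b-t}{b-a}\,a+\frac{t-a}{b-a}\,b$ as a convex combination of the endpoints, so convexity of $f$ gives $f(t)\leq \frac{b-t}{b-a}f(a)+\frac{t-a}{b-a}f(b)$. Multiplying by $w(t)\geq 0$ preserves the inequality, and integrating with the linearity and monotonicity furnished by items (i), (ii) and (v) of Theorem~\ref{Int-Proprty} gives $\int_a^b f(t)w(t)\Delta^\alpha t\leq \frac{f(a)}{b-a}\int_a^b(b-t)w(t)\Delta^\alpha t+\frac{f(b)}{b-a}\int_a^b(t-a)w(t)\Delta^\alpha t$. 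Using $\int_a^b t\,w(t)\Delta^\alpha t=x_{w,\alpha}W$ and $\int_a^b w(t)\Delta^\alpha t=W$, the two integrals collapse to $(b-x_{w,\alpha})W$ and $(x_{w,\alpha}-a)W$ respectively; dividing by $W$ then produces exactly the right-hand side of \eqref{eq:HHineq}.

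The steps are essentially routine once the correct substitutions are made, so there is no serious analytic obstacle. The only mild subtleties are (a) justifying the applicability of Theorem~\ref{thm:Jen} on the closed interval, as noted above, and (b) deriving plain monotonicity of the $\alpha$-fractional integral from the absolute-value form stated in item (v): if $\varphi\leq\psi$, then applying (v) to the nonnegative function $\psi-\varphi$ yields $\int_a^b(\psi-\varphi)\Delta^\alpha t\geq 0$, whence $\int_a^b\varphi\,\Delta^\alpha t\leq\int_a^b\psi\,\Delta^\alpha t$ by (i) and (ii). Both are dispatched in a line, so the heart of the argument is the bookkeeping that converts the integrated chord estimate into the stated endpoint weights.
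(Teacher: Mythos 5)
Your proposal is correct and follows essentially the same route as the paper: the left inequality via Theorem~\ref{thm:Jen} with $g(s)=s$ and $h=w$, and the right inequality by integrating the chord estimate (the paper writes it in point--slope form $f(a)+\frac{f(b)-f(a)}{b-a}(t-a)$, which is algebraically the same as your convex-combination form) after multiplying by $w(t)\geq 0$. If anything, you are more careful than the paper, which silently ignores both the open-interval hypothesis $(c,d)$ in Theorem~\ref{thm:Jen} and the derivation of monotonicity of the $\alpha$-fractional integral from item (v) of Theorem~\ref{Int-Proprty}.
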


\begin{proof}
For every convex function one has
$$
f(t) \leq f(a) + \frac{f(b)-f(a)}{b-a} (t-a).
$$
Multiplying this inequality with $w(t)$, which is nonnegative, we get
$$
w(t) f(t) \leq f(a) w(t) + \frac{f(b)-f(a)}{b-a} (t-a) w(t).
$$
Taking the $\alpha$-fractional integral on both sides, we can write that
$$
\int_{a}^{b} w(t) f(t) \Delta^\alpha t 
\leq \int_{a}^{b} f(a) w(t) \Delta^\alpha t
+ \int_{a}^{b} \frac{f(b)-f(a)}{b-a} (t-a) w(t) \Delta^\alpha t,
$$ 
which implies
\begin{multline*}
\int_{a}^{b} w(t) f(t) \Delta^\alpha t \\
\leq f(a) \int_{a}^{b} w(t) \Delta^\alpha t
+ \frac{f(b)-f(a)}{b-a} \left(\int_{a}^{b}  t w(t) \Delta^\alpha t 
- a \int_{a}^{b} w(t) \Delta^\alpha t\right),
\end{multline*}
that is,
$$
\frac{1}{\int_a^b w(t) \Delta^\alpha t} \int_{a}^{b} f(t) w(t) \Delta^\alpha t
\leq \frac{b - x_{w,\alpha}}{b-a} f(a) + \frac{x_{w,\alpha} - a}{b-a} f(b).
$$
We have just proved the second inequality of \eqref{eq:HHineq}.
For the first inequality of \eqref{eq:HHineq}, we use \eqref{eq:Jen:conv}
of Theorem~\ref{thm:Jen} by taking $g : \mathbb{T} \rightarrow \mathbb{T}$
defined by $g(s) = s$ for all $s \in \mathbb{T}$ and 
$h : \mathbb{T} \rightarrow \mathbb{R}$ given by $h = w$. 
\end{proof}

Note that if in Theorem~\ref{thm:HHineq} we consider a concave function $f$
instead of a convex one, then the inequalities of \eqref{eq:HHineq} are reversed.


\section*{Acknowledgements}

Torres was partially supported by the Portuguese Foundation for Science
and Technology (FCT), through the Center for Research and Development
in Mathematics and Applications (CIDMA), within project UID/MAT/04106/2013.
The authors are greatly indebted to two referees for their several useful
suggestions and valuable comments.



\end{document}